%
%
%
\documentclass[reqno,11pt]{amsart}
\usepackage{epsfig,amscd,amssymb,amsmath,amsfonts}
\usepackage{lscape}
\usepackage{amsmath}
\usepackage{graphicx}
\usepackage{amsthm,color}
\usepackage{tikz}
\usepackage{float}
\usepackage{xparse}

\usetikzlibrary{shapes.geometric}
\usetikzlibrary{shapes,positioning,intersections,quotes}
\usetikzlibrary{graphs}
\usetikzlibrary{graphs,quotes}
\usetikzlibrary{decorations.pathmorphing}

\tikzset{snake it/.style={decorate, decoration=snake}}
\tikzset{snake it/.style={decorate, decoration=snake}}

\usetikzlibrary{decorations.pathreplacing,decorations.markings,snakes}

\newtheorem{theorem}{Theorem}[section]
\newtheorem{lemma}[theorem]{Lemma}
\newtheorem{proposition}{Proposition}[section]
\theoremstyle{definition}
\newtheorem{definition}[theorem]{Definition}
\newtheorem{corollary}[theorem]{Corollary}

\newtheorem{example}[theorem]{Example}

\theoremstyle{remark}
\newtheorem{remark}[theorem]{Remark}

\numberwithin{equation}{section}

\usepackage[margin=1.05in]{geometry}
\usepackage[colorlinks]{hyperref}
\usepackage{siunitx}



\begin{document}

\title[ Existence of the $det^{S^2}$ map]{ Existence of the $det^{S^2}$ map}

\author{Mihai D. Staic}
\address{Department of Mathematics and Statistics, Bowling Green State University, Bowling Green, OH 43403 }
\address{Institute of Mathematics of the Romanian Academy, PO.BOX 1-764, RO-70700 Bu\-cha\-rest, Romania.}

\email{mstaic@bgsu.edu}




\subjclass[2020]{Primary  15A15, Secondary  05C70 }

\keywords{exterior algebra, determinant}

\begin{abstract} In this paper we show that  for a vector space $V_d$ of dimension $d$ there exists a linear map $det^{S^2}:V_d^{\otimes d(2d-1)}\to k$ with  the property that $det^{S^2}(\otimes_{1\leq i<j\leq 2d}(v_{i,j}))=0$ if there exists $1\leq x<y<z\leq 2d$ such that $v_{x,y}=v_{x,z}=v_{y,z}$. The existence of such a map was conjectured in  \cite{sta2}. 
We present two applications of the map $det^{S^2}$ to geometry and combinatorics. 
\end{abstract}
\maketitle



%

\section{Introduction}

The determinant of a matrix plays an important role in several areas of mathematics. It captures quantitative information (like area of a region, volume of a solid), but also  qualitative  information (like linear dependence of $d$ vectors in a $d$-dimensional vector space). Heuristically, the best way to introduce the determinant of a linear transformation $T:V_d\to V_d$, is to consider the exterior algebra of the $d$-dimensional vector space $V_d$, and then define the determinant as the unique constant that determines the map $\Lambda(T):\Lambda_{V_d}[d]\to \Lambda_{V_d}[d]$. Equivalently, one can show that the determinant is the unique (up to a scalar) nontrivial linear map $det:V_d^{\otimes d}\to k$ with the property that $det(\otimes_{1\leq i\leq d} (v_i))=0$ if there exist $1\leq x<y\leq d$ such that $v_x=v_y$.

The graded vector space $\Lambda_{V_d}^{S^2}$ was introduced in \cite{sta2} as a generalization of the exterior algebra. It has properties similar with the ones of the exterior algebra, for example $\Lambda_{V_d}^{S^2}[n]=0$ if $n>2d$. It was conjectured in \cite{sta2} that $dim_k(\Lambda_{V_d}^{S^2}[2d])=1$. This conjecture is equivalent with the existence and uniqueness (up to a scalar) of a nontrivial linear map $det^{S^2}:V_d^{\otimes d(2d-1)}\to k$  with the property that $det(\otimes_{1\leq i<j\leq 2d} (v_{i,j}))=0$ if there exist $1\leq x<y<z\leq 2d$ such that $v_{x,y}=v_{x,z}=v_{y,z}$. The conjecture was checked to be true in the case $d=2$ (\cite{sta2}) and $d=3$ (\cite{lss}). 

In this paper we show for every $d$ there exists a nontrivial map $det^{S^2}:V_d^{\otimes d(2d-1)}\to k$ with the above mentioned property. For this we consider a system of $2d$ vector equations associate to $(v_{i,j})_{1\leq i<j\leq 2d}\in V_d^{d(2d-1)}$. The corresponding matrix is of dimension $2d^2\times d(2d-1)$, but we can eliminate one of the vector equations to get a square $d(2d-1)\times d(2d-1)$ matrix. The determinant of this square matrix is nontrivial, and has the universality property we are looking for. 
In particular  this shows that $dim_k(\Lambda_{V_d}^{S^2}[2d])\geq 1$. The uniqueness of the map $det^{S^2}$  is still an open question. 

As an application we give a geometrical interpretation of the condition $det^{S^2}(\otimes_{1\leq i<j\leq 2d} (v_{i,j}))=0$. In particular, we show that if $p_i\in V_d$  for $1\leq i\leq 2d$, and we take $v_{i,j}=p_j-p_i$ for all $1\leq i<j\leq 2d$ then $det^{S^2}(\otimes_{1\leq i<j\leq 2d} (v_{i,j}))=0$. This a generalization of a result proved in \cite{sv} for the  case $d=2$ and $d=3$. 

We also give an application to combinatorics. 
More precisely we show that $(\Gamma_1,\dots,\Gamma_d)$ is cycle free $d$-partition of the complete graph $K_{2d}$   if and only if $det^{S^2}(f_{(\Gamma_1,\dots,\Gamma_d)})\neq 0$ (where $f_{(\Gamma_1,\dots,\Gamma_d)}\in V_d^{d(2d-1)}$ is a certain  element associated to  $(\Gamma_1,\dots,\Gamma_d)$). The case $d=2$ and $d=3$ was proved in \cite{lss}.   One can think about this result as a generalization of the fact that a $d\times d$ matrix that has $d$ entries equal to $1$ and the rest of the entries equal to zero will have a nonzero determinant if and only if it has a nonzero entry in every row and every column.

\section{Preliminary}

In this paper $k$ is a field, $V_d$ is a $d$-dimensional vector space, and $\mathcal{B}_d=\{e_1,\dots,e_d\}$ is a fixed basis for $V_d$. We denote by $V_d^{\otimes m}$ the $m$-th tensor power of $V_d$. 

The exterior algebra $\Lambda_{V_d}$ can be defined as the quotient of the tensor algebra $\displaystyle{T_{V_d}=\oplus_n V_d^{\otimes n}}$ by the ideal generated by elements of the form $u\otimes u$ where $u\in V_d$. It is well know that $dim_k(\Lambda_{V_d}[d])=1$, in particular if $T:V_d\to V_d$ is a linear map then $\Lambda(T):\Lambda_{V_d}[d]\to \Lambda_{V_d}[d]$ is the  multiplication by a constant, which by definition is denoted by $det(T)$. Alternatively, one can define the determinant as the unique nontrivial linear map $det:V_d^{\otimes d}\to k$ with the property that $det(\otimes_{1\leq i\leq d}(v_i))=0$ if there exist $1\leq x<y\leq d$ such that $v_{x}=v_{y}$. 

Next we recall from \cite{lss}, \cite{sta2}, and  \cite{sv} a few results about $\Lambda_{V_d}^{S^2}$ and the $det^{S^2}$ map. For every $n\geq 0$ we define 
$$\Lambda_{V_d}^{S^2}[n]=\frac{\mathcal{T}^{S^2}_{V_d}[n]}{\mathcal{E}^{S^2}_{V_d}[n]},$$
where $\mathcal{T}^{S^2}_{V_d}[n]=V_d^{\otimes \frac{n(n-1)}{2}}$, and   $\mathcal{E}^{S^2}_{V_d}[n]$ is the subspace of $\mathcal{T}^{S^2}_{V_d}[n]$ generated by those elements $\otimes_{1\leq i<j\leq n}(v_{i,j})\in V_d^{\otimes \frac{n(n-1)}{2}}$ with the property that there exists $1\leq x<y<z\leq n $ such that $v_{x,y}=v_{x,z}=v_{y,z}$. Notice that we use a slightly different notation from the one in \cite{sta2}, more precisely the grading of $\Lambda_{V_d}^{S^2}$ is shifted by $1$ (i.e. $\Lambda_{V_d}^{S^2}[n]=\Lambda_{V_d}^{S^2}(n+1)$). This is more consistent with the usual grading on the exterior algebra. 

It was shown in \cite{sta2} that $\Lambda_{V_d}^{S^2}[n]=0$ if $n>2d$. It was conjectured in the same paper that
$dim_k(\Lambda_{V_d}^{S^2}[2d])=1$. This conjecture is equivalent with the existence and uniqueness (up to a constant) of a nontrivial linear map $det^{S^2}:V_d^{\otimes d(2d-1)}\to k$ such that $det^{S^2}(\otimes_{1\leq i< j\leq 2d}(v_{i,j}))=0$ if there exist $1\leq x<y<z\leq 2d$ such that $v_{x,y}=v_{x,z}=v_{y,z}$. Notice the similitude with the determinant map. 

The conjecture was checked for $d=2$ in \cite{sta2}, and for $d=3$ in \cite{lss}. In particular for $d=2$ and $d=3$ there exists a map $det^{S^2}:V_d^{\otimes d(2d-1)}\to k$ with the above mentioned property.  In this paper we show $dim_k(\Lambda_{V_d}^{S^2}[2d])\geq 1$, i.e. we prove the existence of a nontrivial map $det^{S^2}$ for any $d$. The uniqueness is still an open question for $d>3$.

We denote by $E_d$ the element $\otimes_{1\leq i< j\leq 2d}(e_{i,j})\in V_d^{\otimes d(2d-1)}$ determined by 
\begin{equation}
e_{i,j}=\begin{cases}	
  	e_t ~~{\rm if}~~i<2t-1,~~i ~~{\rm is ~ odd}~~ {\rm and }~~j=2t-1,\\
		e_t ~~{\rm if}~~i<2t,~~i ~~{\rm is ~ even} ~~{\rm and }~~j=2t,\\
		e_t ~~{\rm if}~~i=2t-1, ~ j>2t-1~ {\rm and }~~j ~~{\rm is ~ even},\\
		e_t ~~{\rm if}~~i=2t,~ j>2t~{\rm and }~~j ~~{\rm is ~ odd}.\\
\end{cases}
\label{defEd}
\end{equation}
It was shown in \cite{lss} that if $d=2$ or $d=3$ then $det^{S^2}(E_d)=1$ (in particular this shows that the map $det^{S^2}$ is nontrivial in those two cases). 
\begin{remark}
When $d=2$ it was shown in \cite{sv} that $det(\otimes_{1\leq i<j\leq 4}(v_{i,j}))=0$ if and only if there exist $p_1,p_2,p_3,p_4\in V_2$, and $\lambda_{i,j}\in k$ not all trivial such that $\lambda_{i,j}v_{i,j}=p_j-p_i$.  A similar but partial result is also true when $d=3$; more precisely if $q_1,q_2,q_3,q_4,q_5,q_6\in V_3$ and we take $w_{i,j}=q_j-q_i$ then $det^{S^2}(\otimes_{1\leq i<j\leq 6}(w_{i,j}))=0$. Later in the paper we will generalize this result. 
\end{remark}

Next we recall from \cite{lss}  a few definitions and examples of $d$-partitions of the complete graph $K_{2d}$. 

\begin{definition}  A $d$-partition of the complete graph $K_{2d}$ is an ordered collection $(\Gamma_1,\Gamma_2,\dots,\Gamma_d)$ of sub-graphs $\Gamma_i$ of  $K_{2d}$ such that:\\
1) $V(\Gamma_i)=V(K_{2d})$ for all $1\leq i \leq d$,  \\
2) $E(\Gamma_i)\cap E(\Gamma_j)=\emptyset$ for all $i\neq j$, \\
3) $\displaystyle{\bigcup_{i=1}^nE(\Gamma_i)=E(K_{2d})}$. \\
We say that the $d$-partition $(\Gamma_1,\Gamma_2,\dots,\Gamma_d)$ in homogeneous if $|E(\Gamma_i)\vert=|E(\Gamma_j)\vert$ for all $1\leq i<j\leq d$. We say that the partition $(\Gamma_1,\Gamma_2,\dots,\Gamma_d)$  is cycle-free if each $\Gamma_i$ is cycle-free.
\end{definition}

Let $\mathcal{B}_d=\{e_1,e_2,\dots,e_d\}$ be a basis for the vector space $V_d$. It was noticed in \cite{lss}  that the set $$\mathcal{G}_{\mathcal{B}_d}[2d]=\{\otimes_{1\leq i<j\leq 2d}(v_{i,j})\in V_d^{\otimes d(2d-1)}\; \vert \; v_{i,j}\in \mathcal{B}_d\},$$ (which is a basis for $V_d^{\otimes d(2d-1)}$) is in bijection with the set $\mathcal{P}_d(K_{2d})$ of $d$-partitions of the complete graph $K_{2d}$.  
Indeed, if $f=\otimes_{1\leq i<j\leq 2d}(v_{i,j})\in\mathcal{G}_{\mathcal{B}_d}[2d]$ we consider the  sub-graphs $\Gamma_i(f)$ of $K_{2d}$ constructed as follows: for every $1\leq i\leq d$ we take $V(\Gamma_i(f))=\{1,2,\dots,2d\}$ and $E(\Gamma_i(f))=\{(s,t)|v_{s,t}=e_i\}$. One can easily see that $\Gamma(f)=(\Gamma_1(f),\dots,\Gamma_d(f))$ is a $d$-partition of $K_{2d}$.

Moreover, the map $f\mapsto \Gamma(f)$  is a bijection from the set $\mathcal{G}_{\mathcal{B}_d}[2d]$ to the set of $d$-partitions of $K_{2d}$. We will denote by $f_{(\Gamma_1,\dots,\Gamma_d)}$ the element in $\mathcal{G}_{\mathcal{B}_d}[2d]$ corresponding to the partition $(\Gamma_1,\dots,\Gamma_d)$. 

\begin{example} Let $$E_3=\begin{pmatrix}
	1& e_1&e_2&e_1&e_3&e_1\\
	&1&e_1&e_2&e_1&e_3\\
	& &1&e_2&e_3&e_2\\
&&&1&e_2&e_3\\
&&&&1&e_3\\
\otimes&&&&&1\\
\end{pmatrix}\in V_3^{\otimes 15}, $$
then the corresponding $3$-partition $\Gamma (E_3)$ of $K_6$ is given in  Figure \ref{E3}. Notice that $\Gamma (E_3)$ is homogeneous and cycle free. See \cite{lss} for more examples. 

\begin{figure}[h!]
	\centering
	\begin{tikzpicture}
		[scale=1.5,auto=left,every node/.style={shape = circle, draw, fill = white,minimum size = 1pt, inner sep=0.3pt}]
		\node (n1) at (0,0) {1};
		\node (n2) at (0.5,0.85)  {2};
		\node (n3) at (1.5,0.85)  {3};
		\node (n4) at (2,0)  {4};
		\node (n5) at (1.5,-0.85)  {5};
		\node (n6) at (0.5,-0.85)  {6};
		\foreach \from/\to in {n1/n2,n1/n4,n1/n6,n2/n3,n2/n5}
		\draw[line width=0.5mm,red]  (\from) -- (\to);	
		\node (n11) at (3,0) {1};
		\node (n21) at (3.5,0.85)  {2};
		\node (n31) at (4.5,0.85)  {3};
		\node (n41) at (5,0)  {4};
		\node (n51) at (4.5,-0.85)  {5};
		\node (n61) at (3.5,-0.85)  {6};
		\foreach \from/\to in {n11/n31,n21/n41,n31/n41,n31/n61,n41/n51}
		\draw[line width=0.5mm,orange]  (\from) -- (\to);	
		
		\node (n12) at (6,0) {1};
		\node (n22) at (6.5,0.85)  {2};
		\node (n32) at (7.5,0.85)  {3};
		\node (n42) at (8,0)  {4};
		\node (n52) at (7.5,-0.85)  {5};
		\node (n62) at (6.5,-0.85)  {6};
		\foreach \from/\to in {n12/n52,n22/n62,n32/n52,n42/n62,n52/n62}
		\draw[line width=0.5mm,blue]  (\from) -- (\to);	
		
	\end{tikzpicture}
	\caption{$\Gamma (E_3)=(\Gamma_1,\Gamma_2,\Gamma_3)$ the $3$-partition associated to $E_3$ \label{E3}}
\end{figure}
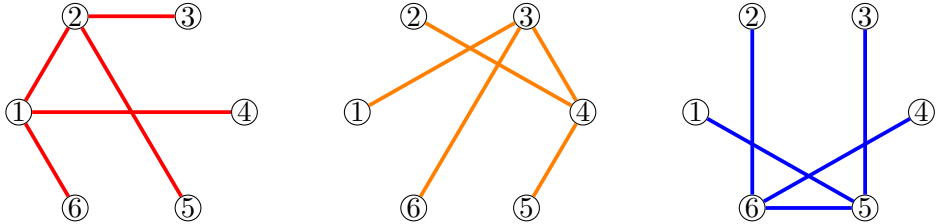
\end{example}

The following results were proved in \cite{lss}.

\begin{lemma}\label{generators} Take $(\Gamma_1,\dots, \Gamma_d)$ a $d$-partition of $K_{2d}$ that is not cycle-free. Then $\hat{f}_{(\Gamma_1,...,\Gamma_d)}=0\in \Lambda_{V_d}^{S^2}[2d]$.
\end{lemma}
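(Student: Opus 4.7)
The plan is to induct on the length $\ell$ of a shortest cycle appearing in any of the subgraphs $\Gamma_i$. In the base case $\ell = 3$, a triangle $\{x,y,z\} \subseteq V(\Gamma_k)$ with $x < y < z$ forces $v_{x,y} = v_{x,z} = v_{y,z} = e_k$ in $f_{(\Gamma_1,\dots,\Gamma_d)}$, so the tensor lies in $\mathcal{E}^{S^2}_{V_d}[2d]$ by definition and $\hat{f}_{(\Gamma_1,\dots,\Gamma_d)} = 0$.

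For the inductive step $\ell \geq 4$, I would first establish a three-term polarization identity from the defining relations. For a fixed triple $x < y < z$ in $\{1,\dots,2d\}$ and vectors $a,b,c \in V_d$, let $T(a,b,c) \in V_d^{\otimes d(2d-1)}$ denote the tensor that agrees with $f_{(\Gamma_1,\dots,\Gamma_d)}$ outside the positions $(x,y),(x,z),(y,z)$ and carries values $a,b,c$ there. By definition $T(w,w,w) \in \mathcal{E}^{S^2}_{V_d}[2d]$ for every $w \in V_d$; expanding $T(\lambda u + v, \lambda u + v, \lambda u + v)$ by trilinearity at $\lambda = 1$ and $\lambda = -1$ and subtracting isolates the $\lambda^{2}$-term, giving
$$T(u,u,v) + T(u,v,u) + T(v,u,u) \equiv 0 \pmod{\mathcal{E}^{S^2}_{V_d}[2d]},$$
provided $\mathrm{char}\, k \neq 2$.

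Now pick a shortest cycle $u_1 - u_2 - \cdots - u_\ell - u_1$ in some $\Gamma_k$. Because $\ell \geq 4$, the chord $(u_1,u_3)$ of $K_{2d}$ cannot lie in $\Gamma_k$ (otherwise $\{u_1,u_2,u_3\}$ would form a triangle in $\Gamma_k$, contradicting the minimality of $\ell$), so it belongs to $\Gamma_{k'}$ for some $k' \neq k$. Reordering $\{u_1,u_2,u_3\}$ as $x < y < z$, the triple $(v_{x,y}, v_{x,z}, v_{y,z})$ is a permutation of $(e_k, e_k, e_{k'})$, and applying the identity with $u = e_k$, $v = e_{k'}$ writes $\hat{f}_{(\Gamma_1,\dots,\Gamma_d)}$ as minus the sum of the classes of the two other permutations. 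Each of these corresponds to a new $d$-partition obtained by moving the chord $(u_1,u_3)$ into $\Gamma_k$ and one of the two cycle edges $(u_1,u_2),(u_2,u_3)$ into $\Gamma_{k'}$; in either case the new $\Gamma_k$ still contains the cycle $u_1 - u_3 - u_4 - \cdots - u_\ell - u_1$ of length $\ell - 1$. The inductive hypothesis then forces both classes to vanish, so $\hat{f}_{(\Gamma_1,\dots,\Gamma_d)} = 0$.

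The main obstacle is producing the three-term polarization identity cleanly: the direct expansion of $T(u+v, u+v, u+v) \in \mathcal{E}^{S^2}_{V_d}[2d]$ only yields a six-term relation, and a second substitution is needed to extract the sharper three-term identity that matches the move ``rotate which of the three positions in the triple carries the foreign color $e_{k'}$''. This step silently uses $\mathrm{char}\, k \neq 2$ and should be flagged (or a separate argument supplied in characteristic $2$). The remaining graph-theoretic bookkeeping, namely checking in all three orderings of $\{u_1,u_2,u_3\}$ inside $\{1,\dots,2d\}$ that each of the two rearranged partitions genuinely retains an $(\ell-1)$-cycle in its $k$-th subgraph, is a routine case-check rather than a creative step.
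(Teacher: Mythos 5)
Your proof is correct and follows essentially the same route as the paper, which simply quotes Lemma \ref{generators} from the reference [lss] and, in the proof of the theorem on cycle-free $d$-partitions in Section 4.2, recalls exactly this induction on cycle length: the triangle base case plus a three-term relation that replaces an $\ell$-cycle by two partitions carrying $(\ell-1)$-cycles. The only points to tidy are ones you already flag: to isolate the coefficient of $\lambda^2$ you should \emph{add} (not subtract) the evaluations at $\lambda=\pm 1$ before discarding $T(v,v,v)$, and the characteristic-$2$ concern disappears whenever $|k|\ge 3$ by evaluating $T(u+\lambda v,u+\lambda v,u+\lambda v)$ at two distinct nonzero values of $\lambda$ and inverting the Vandermonde factor $\lambda_1\lambda_2(\lambda_2-\lambda_1)$.
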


\begin{lemma} Take $(\Gamma_1,\dots, \Gamma_d)$ a cycle-free homogenous $d$-partition of $K_{2d}$, and  $1\leq x<y<z\leq 2d$. Then, there exist $(\Lambda_1,\dots, \Lambda_d)$, a unique cycle-free homogenous $d$-partition of $K_{2d}$  such that the two partitions $(\Gamma_1,\dots,\Gamma_d)$  and $(\Lambda_1,\dots,\Lambda_d)$ coincide on every edge of $K_{2d}$ except on the edges $(x,y)$, $(x,z)$, and $(y,z)$ where they are different on at least two edges. We will denote $(\Lambda_1,\dots,\Lambda_d)$ by $(\Gamma_1,\dots,\Gamma_d)^{(x,y,z)}$.
With the above notations we have $$\hat{f}_{(\Gamma_1,\dots,\Gamma_d)}=-\hat{f}_{(\Gamma_1,\dots,\Gamma_d)^{(x,y,z)}}\in {\Lambda}^{S^2}_{V_d}[2d].$$

\label{keylemma}
\end{lemma}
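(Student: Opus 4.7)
The plan is to prove existence, uniqueness of $(\Lambda_1,\dots,\Lambda_d)$, and the sign relation in one stroke, by exploiting the defining relation of $\Lambda^{S^2}_{V_d}[2d]$: for any $v \in V_d$, the tensor obtained from $(\Gamma_1,\dots,\Gamma_d)$ by replacing the three entries indexed by $(x,y),(x,z),(y,z)$ by $v$ (and leaving the rest untouched) has zero image in $\Lambda^{S^2}_{V_d}[2d]$. I would specialize $v$ to the sum of basis vectors $e_c$ over the colors $c$ present on the triangle, expand by multilinearity, and use Lemma~\ref{generators} to kill the non-cycle-free terms.

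First I would set the combinatorial stage. Since the partition is cycle-free and homogeneous, each $\Gamma_i$ has $\binom{2d}{2}/d=2d-1$ edges on $2d$ vertices with no cycle, hence is a spanning tree of $K_{2d}$. The three triangle edges cannot all share a color (else some $\Gamma_i$ contains a $3$-cycle), so the multiset of their colors is either $\{a,b,c\}$ with $a,b,c$ distinct (Case A) or of the shape $\{a,a,b\}$ with $a\neq b$ (Case B). A short counting argument shows that every alternative cycle-free partition $(\Lambda_1,\dots,\Lambda_d)$ agreeing with $(\Gamma_1,\dots,\Gamma_d)$ off the triangle must carry the same multiset on the triangle: raising the count of any color $i$ by one would add a chord to the spanning tree $\Gamma_i$, creating a cycle. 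Hence the candidates for $(\Lambda_1,\dots,\Lambda_d)$ are precisely the permutations of the original triangle coloring---six in Case A and three in Case B.

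The technical heart of the proof, and the step I expect to be the main obstacle, is to show that in each case exactly one non-identity permutation gives a cycle-free partition. For each color $i$ present on the triangle, let $\Gamma_i^{\ast}$ be the forest obtained by deleting from the spanning tree $\Gamma_i$ its triangle edge(s); under a color-permutation, the new $\Gamma_i$ is $\Gamma_i^{\ast}$ together with the newly assigned triangle edges of color $i$, and it is a spanning tree iff those replacement edges reconnect the components of $\Gamma_i^{\ast}$. This reduces the question to a finite combinatorial check, parameterized by which component of each $\Gamma_i^{\ast}$ contains the remaining triangle vertex: eight configurations in Case A and two in Case B. A direct case-by-case tabulation shows that, in every one of these configurations, exactly one non-identity permutation satisfies the tree-exchange condition simultaneously for all colors, yielding both existence and uniqueness of $(\Gamma_1,\dots,\Gamma_d)^{(x,y,z)}$.

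Finally, to obtain the sign, set $v=\sum_c e_c$ summed over the triangle colors and expand the defining identity as $\sum T(i,j,k)=0$ in $\Lambda^{S^2}_{V_d}[2d]$, where $T(i,j,k)$ denotes the tensor with triangle colors $(i,j,k)$ and all other entries from the original partition. Any term whose multiset exceeds the original on some color corresponds to a partition in which that color has a cycle (too many edges on its spanning tree), and vanishes by Lemma~\ref{generators}; among the remaining same-multiset terms, every one except the two cycle-free permutations identified in the previous paragraph also vanishes by Lemma~\ref{generators}. What survives is exactly $\hat{f}_{(\Gamma_1,\dots,\Gamma_d)}+\hat{f}_{(\Gamma_1,\dots,\Gamma_d)^{(x,y,z)}}=0$, which is the claimed sign relation.
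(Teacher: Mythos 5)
Your proposal is correct. Note first that the paper itself does not prove Lemma~\ref{keylemma}; it only cites \cite{lss} for it, so there is no in-text argument to compare against. On its own merits your argument is sound and complete in outline: homogeneity plus cycle-freeness forces each $\Gamma_i$ to be a spanning tree, the triangle cannot be monochromatic, and the counting argument correctly pins the multiset of triangle colors, reducing the candidates to the $6$ (resp.\ $3$) permutations. The tree-exchange reduction is the right formalization, and the tabulation you defer to does check out: encoding, for each triangle color, which component of the split tree contains the third triangle vertex, one finds in all $8$ configurations of Case A and both configurations of Case B that exactly one non-identity permutation reconnects every tree simultaneously. (You should actually write out this table, since it is the entire content of existence and uniqueness, but the claim is true.) Your derivation of the sign relation is the elegant part: substituting $v=\sum_c e_c$ over the triangle colors into the defining generator of $\mathcal{E}^{S^2}_{V_d}[2d]$, expanding multilinearly, and killing every term except $\hat{f}_{(\Gamma_1,\dots,\Gamma_d)}$ and $\hat{f}_{(\Gamma_1,\dots,\Gamma_d)^{(x,y,z)}}$ via Lemma~\ref{generators} is exactly what is needed; the terms whose color multiset strictly exceeds the original in some color give a graph with more than $2d-1$ edges on $2d$ vertices, hence a cycle, and the same-multiset non-cycle-free permutations vanish for the same reason. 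One small caution: Lemma~\ref{generators} as stated applies to arbitrary (not necessarily homogeneous) $d$-partitions that fail to be cycle-free, which is what you need since the excess-multiset terms are inhomogeneous; this is consistent with its statement in the paper, so no gap arises.
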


\begin{remark} Lemma \ref{generators}  and Lemma \ref{keylemma} were used in  \cite{lss} to show  existence of the map $det^{S^2}$ when $d=3$. In that case the map $det^{S^2}$ can be written as a sum over all cycle-free $d$-partitions of the complete graph $K_{2d}$. The approach in this paper is different, but the above two lemmas provide an intriguing connection between linear algebra  behind $\Lambda_{V_d}^{S^2}[2d]$, and the combinatorics of $\mathcal{P}_d(K_{2d})$. We will discuss this connection in the last section. 

Finally, one should notice that (when $d=2$ or $d=3$) the map $det^{S^2}$ is invariant under the action of the group $SL_d(k)$ on $V_d$, and so, by general results from invariant theory (\cite{stu}), it can be written as a sum of products of determinants of matrices with columns the vectors $v_{i,j}$ (see \cite{sv}). 
\end{remark}

\section{The Existence of the $det^{S^2}$ map}

Take $V_d$ a vector space of dimension $d$, and  fix $v_{i,j}\in V_d$ for all $1\leq i<j\leq 2d$. For each $1\leq k\leq 2d$ consider the  vector equation $\mathcal{E}_k((v_{i,j})_{1\leq i<j\leq 2d})$ defined as
\begin{eqnarray}
\sum_{s=1}^{k-1}(-1)^{s-1}\lambda_{s,k}v_{s,k}+ \sum_{t=k+1}^{2d}(-1)^{t}\lambda_{k,t}v_{k,t}=0.
\label{eqk}
\end{eqnarray}
We denote by $\mathcal{S}((v_{i,j})_{1\leq i<j\leq 2d})$  the system consisting of vector equations $\mathcal{E}_k((v_{i,j})_{1\leq i<j\leq 2d})$ for all $1\leq k\leq 2d$. 

Notice that the $2d$ vector equations are dependent. Indeed, each vector $v_{i,j}$ appears twice in our system, first time in equation $\mathcal{E}_i((v_{i,j})_{1\leq i<j\leq 2d})$ with coefficient $(-1)^j\lambda_{i,j}$, and second time in the equation $\mathcal{E}_j((v_{i,j})_{1\leq i<j\leq 2d})$ with coefficient $(-1)^{i-1}\lambda_{i,j}$. This means that 
\begin{eqnarray}
\sum_{k=1}^{2d}(-1)^{k-1}\mathcal{E}_k((v_{i,j})_{1\leq i<j\leq 2d})=0, \label{eq-dep}
\end{eqnarray}
and so, when studding this system it is enough to consider any $2d-1$ of the $2d$ vector equations. 

\begin{definition}  Let $v_{i,j}\in V_d$ for all $1\leq i<j\leq 2d$. We denote by $A((v_{i,j})_{1\leq i< j\leq 2d})$ the $d(2d)\times d(2d-1)$ matrix of the system $\mathcal{S}((v_{i,j})_{1\leq i<j\leq 2d})$. We denote by $A_k((v_{i,j})_{1\leq i< j\leq 2d})$ the $d(2d-1)\times d(2d-1)$ matrix corresponding to the system obtained after eliminating equation $\mathcal{E}_k((v_{i,j})_{1\leq i<j\leq 2d})$. Finally, we denote by $M_k((v_{i,j})_{1\leq i< j\leq 2d})$ the $d\times d(2d-1)$ matrix of the vector equation $\mathcal{E}_k((v_{i,j})_{1\leq i<j\leq 2d})$. 
\end{definition}

When the  vectors $v_{i,j}$ are clear from the context, in the interest of shortening the notation, we will suppress the vectors $v_{i,j}$ and write $A$, $A_k$ and $M_k$ respectively. 
Notice that matrix $A((v_{i,j})_{1\leq i<j\leq 2d})$ can be obtained by staking all the $M_k((v_{i,j})_{1\leq i<j\leq 2d})$'s on top of each other, i.e. 
\begin{equation}
    A((v_{i,j})_{1\leq i<j\leq 2d})=\begin{pmatrix}
M_1((v_{i,j})_{1\leq i<j\leq 2d})\\
M_2((v_{i,j})_{1\leq i<j\leq 2d}) \\
\vdots \\
M_{2d}((v_{i,j})_{1\leq i<j\leq 2d}) \\
\end{pmatrix}.\label{equation3}
\end{equation}
Similarly, we have 
\begin{equation}
 A_k=\begin{pmatrix}
M_1(v_{i,j})_{1\leq i<j\leq 2d}) \\
M_2((v_{i,j})_{1\leq i<j\leq 2d})\\
\vdots \\
M_{k-1}((v_{i,j})_{1\leq i<j\leq 2d})\\
M_{k+1}((v_{i,j})_{1\leq i<j\leq 2d})\\
\vdots \\
M_{2d-1}((v_{i,j})_{1\leq i<j\leq 2d})\\
M_{2d}((v_{i,j})_{1\leq i<j\leq 2d}) \\
\end{pmatrix}.
\end{equation}

\begin{remark} Using the matrices $M_k((v_{i,j})_{1\leq i< j\leq 2d})$, the dependence between the equations  $\mathcal{E}_k$ (i.e. equation \ref{eq-dep}) can be rewritten as  
\begin{eqnarray}
\sum_{k=1}^{2d}(-1)^{k-1}M_k((v_{i,j})_{1\leq i< j\leq 2d})=0\in M_{d\times d(2d-1)}. \label{eq-dep2}
\end{eqnarray}
\end{remark}

\begin{example} When $d=2$ and $v_{i,j}=\begin{pmatrix}
 \alpha_{i,j} \\
 \beta_{i,j}
\end{pmatrix}$ for all $1\leq i<j\leq 4$, the system $\mathcal{S}((v_{i,j})_{1\leq i<j\leq 4})$ becomes  
\begin{equation}
    \begin{pmatrix}
\alpha_{1,2} & -\alpha_{1,3} & 0 &\alpha_{1,4} & 0 &0\\
\beta_{1,2} & -\beta_{1,3} & 0&\beta_{1,4}& 0&0\\
\alpha_{1,2} & 0 & -\alpha_{2,3} & 0 & \alpha_{2,4} & 0\\
\beta_{1,2} & 0 & -\beta_{2,3} & 0 & \beta_{2,4} & 0\\
0 & \alpha_{1,3} & -\alpha_{2,3} & 0 & 0 &\alpha_{3,4}\\
0 & \beta_{1,3} & -\beta_{2,3} & 0 & 0 &\beta_{3,4}\\
0 & 0 & 0 & \alpha_{1,4} & -\alpha_{2,4} &\alpha_{3,4}\\
0 & 0 & 0 & \beta_{1,4} & -\beta_{2,4} &\beta_{3,4}\\
\end{pmatrix}\begin{pmatrix}
\lambda_{1,2}\\
\lambda_{1,3}\\
\lambda_{2,3}\\
\lambda_{1,4}\\
\lambda_{2,4}\\
\lambda_{3,4}
\end{pmatrix}=0,\label{equation1}
\end{equation}
with  $A((v_{i,j})_{1\leq i<j\leq 4})$ being the $8\times 6$ matrix of the system. When $k=2$ we have
\begin{equation}
    A_2((v_{i,j})_{1\leq i<j\leq 4})=\begin{pmatrix}
\alpha_{1,2} & -\alpha_{1,3} & 0 &\alpha_{1,4} & 0 &0\\
\beta_{1,2} & -\beta_{1,3} & 0&\beta_{1,4}& 0&0\\
0 & \alpha_{1,3} & -\alpha_{2,3} & 0 & 0 &\alpha_{3,4}\\
0 & \beta_{1,3} & -\beta_{2,3} & 0 & 0 &\beta_{3,4}\\
0 & 0 & 0 & \alpha_{1,4} & -\alpha_{2,4} &\alpha_{3,4}\\
0 & 0 & 0 & \beta_{1,4} & -\beta_{2,4} &\beta_{3,4}\\
\end{pmatrix},\label{equation2}
\end{equation}
\begin{equation}
    M_2((v_{i,j})_{1\leq i<j\leq 4})=\begin{pmatrix}
\alpha_{1,2} & 0 & -\alpha_{2,3} & 0 & \alpha_{2,4} & 0\\
\beta_{1,2} & 0 & -\beta_{2,3} & 0 & \beta_{2,4} & 0\\
\end{pmatrix}.\label{equation4}
\end{equation}
\label{example1}
\end{example}

\begin{remark}
Computing the determinant of the  matrix  $A_1((v_{i,j})_{1\leq i<j\leq 4})$ one recovers the formula for $det^{S^2}$ from \cite{sta2} (see also \cite{sv}). A different approach  to get the same result is to show that $det(A_1((v_{i,j})_{1\leq i<j\leq 4}))$ satisfies the universality property of the map $det^{S^2}$.

Indeed, first notice that 
$$det(A_1((v_{i,j})_{1\leq i< j\leq 4}))=det(A_k((v_{i,j})_{1\leq i< j\leq 4})),$$
for all $1\leq k\leq 4$. This essentially follows follows from Equation \ref{eq-dep2}. For example, in order to show that $det(A_1((v_{i,j})_{1\leq i< j\leq 4}))=det(A_2((v_{i,j})_{1\leq i< j\leq 4}))$ we use elementary transformations and the fact that $M_2=M_1+M_3-M_4$. More precisely, in the matrix $$A_2((v_{i,j})_{1\leq i< j\leq 4})=\begin{pmatrix}
M_1((v_{i,j})_{1\leq i< j\leq 4}) \\
M_3((v_{i,j})_{1\leq i< j\leq 4}) \\
M_4((v_{i,j})_{1\leq i< j\leq 4}) \\
\end{pmatrix},$$
 we add  rows $M_3-M_4$ to $M_1$ (i.e. $R_3-R_5$ to $R_1$, respectively $R_4-R_6$ to $R_2$)  to get  
\begin{eqnarray*}
det(A_1((v_{i,j})_{1\leq i< j\leq 4}))&=&
det(\begin{pmatrix}
M_1((v_{i,j})_{1\leq i< j\leq 4})+M_3((v_{i,j})_{1\leq i< j\leq 4})-M_4((v_{i,j})_{1\leq i< j\leq 4})\\
M_3((v_{i,j})_{1\leq i< j\leq 4}) \\
M_4((v_{i,j})_{1\leq i< j\leq 4}) \\
\end{pmatrix})\\
&=&det(\begin{pmatrix}
M_2((v_{i,j})_{1\leq i< j\leq 4}) \\
M_3((v_{i,j})_{1\leq i< j\leq 4}) \\
M_4((v_{i,j})_{1\leq i< j\leq 4}) \\
\end{pmatrix}),
\end{eqnarray*}
and so $det(A_1((v_{i,j})_{1\leq i< j\leq 4}))=det(A_2((v_{i,j})_{1\leq i< j\leq 4}))$.

Next, notice that if $v_{2,3}=v_{2,4}=v_{3,4}$ then  column  three, five and six of the matrix $A$ are linearly dependent because their sum is zero. And so, the corresponding  columns of matrix $A_1$ are linearly dependent,  which  means that $det(A_1)=0$. Similarly, if $v_{1,3}=v_{1,4}=v_{3,4}$ then $det(A_2)=0$, if $v_{1,2}=v_{1,4}=v_{2,4}$ then $det(A_3)=0$, and if $v_{1,2}=v_{1,3}=v_{2,3}$  then $det(A_4)=0$. 

Finally, one can check that if $e_{1,2}=e_{1,4}=e_{2,3}=e_1$ and $e_{1,3}=e_{2,4}=e_{3,4}=e_2$ then  $det(A_1((e_{i,j})_{1\leq i< j\leq 4}))=1$, which combined with the uniqueness of the map $det^{S^2}$ proved in \cite{sta2} for $d=2$, it shows  that $det(A_1((e_{i,j})_{1\leq i< j\leq 4}))=det^{S^2}((e_{i,j})_{1\leq i< j\leq 4})$. 
\label{remark1}
\end{remark}

\begin{remark} The system in Remark \ref{remark1} gives a geometrical interpretation for the condition $det^{S^2}((v_{i,j})_{1\leq i< j\leq 4})=0$ that is equivalent with the one in \cite{sv}. Notice however that the setting here is slightly different, as we use a different set of vector equations.  Later in the paper we will come back to this geometrical interpretation and discuss the general case. In particular, we will strengthen the result for $d=3$ from \cite{sv}.  
\end{remark}

Next we show the existence of a nontrivial map $det^{S^2}$ map for any $d$. 

\begin{theorem}
Let $d\geq 2$ and  $V_d$ be a $d$-dimensional vector space. Define $det^{S^2}:V_d^{d(2d-1)}\to k$ determined by 
$det^{S^2}((v_{i,j})_{1\leq i< j\leq 2d})=det(A_1((v_{i,j})_{1\leq i< j\leq 2d}))$. Then $det^{S^2}$ is a nontrivial  multilinear map with the property that $det^{S^2}((v_{i,j})_{1\leq i< j\leq 2d})=0$ if there exist $1\leq x<y<z\leq 2d$ such that $v_{x,y}=v_{x,z}=v_{y,z}$. 
\end{theorem}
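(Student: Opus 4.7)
The proof breaks naturally into three pieces: multilinearity of the proposed map, the vanishing property for triples $v_{x,y}=v_{x,z}=v_{y,z}$, and nontriviality. The first two are structural properties of the matrix $A_1((v_{i,j})_{1\le i<j\le 2d})$; the third is where the real work lies.

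\textbf{Multilinearity.} Each vector $v_{i,j}$ enters $A_1$ through exactly one column, namely the column indexed by $(i,j)$: its coordinates occupy the rows of blocks $i$ and $j$ with signs dictated by \eqref{eqk}, and no other column of $A_1$ depends on $v_{i,j}$. Multilinearity of the determinant in its columns therefore yields multilinearity of $det^{S^2}$ in the vectors $v_{i,j}$.

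\textbf{Vanishing.} Fix $1\le x<y<z\le 2d$ and suppose $v_{x,y}=v_{x,z}=v_{y,z}=v$. Reading the entries off \eqref{eqk}, the three columns $\mathrm{col}(x,y), \mathrm{col}(x,z), \mathrm{col}(y,z)$ of the full matrix $A$ have nonzero entries only in blocks $x,y,z$: column $(x,y)$ contributes $(-1)^y v$ in block $x$ and $(-1)^{x-1}v$ in block $y$; column $(x,z)$ contributes $(-1)^z v$ in block $x$ and $(-1)^{x-1}v$ in block $z$; column $(y,z)$ contributes $(-1)^z v$ in block $y$ and $(-1)^{y-1}v$ in block $z$. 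A direct sign check shows that
\[
(-1)^{z-x}\,\mathrm{col}(x,y)\;-\;(-1)^{y-x}\,\mathrm{col}(x,z)\;+\;\mathrm{col}(y,z)\;=\;0
\]
in each of blocks $x,y,z$ of $A$. This linear relation among columns persists after deleting block $1$ to form $A_1$, whether or not $1\in\{x,y,z\}$, so three columns of $A_1$ are dependent and $\det(A_1)=0$.

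\textbf{Nontriviality.} This is the main obstacle. I would prove $\det(A_1(E_d))\ne 0$ for the element $E_d$ of \eqref{defEd}. Because of \eqref{eq-dep2}, equation $\mathcal{E}_1$ is an alternating sum of $\mathcal{E}_2,\dots,\mathcal{E}_{2d}$, so $\ker A(E_d)=\ker A_1(E_d)$, and it suffices to show that the homogeneous system $\mathcal{S}(E_d)$ admits only the trivial solution. I would reformulate the system combinatorially: in the basis $\{e_1,\dots,e_d\}$ the $e_t$-component of the equation $\mathcal{E}_k(E_d)$ is a signed sum of the scalars $\lambda_e$ over edges $e$ of $\Gamma_t=\Gamma_t(E_d)$ incident to the vertex $k$. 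From \eqref{defEd} one reads off that each $\Gamma_t$ is a ``double-star'': the vertices $2t-1$ and $2t$ are joined by the diagonal edge, and every other vertex is a leaf attached to exactly one of them. Consequently, for any vertex $k\notin\{2t-1,2t\}$ the $e_t$-component of $\mathcal{E}_k(E_d)$ reduces to a single monomial $\pm\lambda_e$, forcing $\lambda_e=0$ for the unique color-$t$ edge incident to $k$. Once every leaf variable vanishes, the $e_t$-component of $\mathcal{E}_{2t-1}(E_d)$ collapses to $\pm\lambda_{2t-1,2t}=0$. Running this argument for each $t$ kills every $\lambda_{i,j}$, so $A(E_d)$ and hence $A_1(E_d)$ has trivial kernel, giving $\det(A_1(E_d))\ne 0$. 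The delicate points are the sign bookkeeping in step (ii) and the verification that every non-diagonal edge of $\Gamma_t$ has a leaf endpoint; both reduce to a case analysis on the parities of $i$ and $j$ in \eqref{defEd}.
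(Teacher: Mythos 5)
Your proposal is correct, and its overall skeleton matches the paper's: define the map as $\det(A_1)$, prove vanishing by exhibiting a linear dependence among the three columns indexed by $(x,y)$, $(x,z)$, $(y,z)$, and prove nontriviality by showing the homogeneous system $\mathcal{S}(E_d)$ has only the trivial solution. You diverge in two places, both to good effect. For the vanishing step, the paper first proves the auxiliary identity $\det(A_1)=\det(A_t)$ for all $t$ (via the dependence relation \eqref{eq-dep2} and a row-block permutation with a sign count), and then chooses $t\notin\{x,y,z\}$ so that the blocks $x,y,z$ survive intact in $A_t$; you instead observe that the relation $(-1)^{z-x}\,\mathrm{col}(x,y)-(-1)^{y-x}\,\mathrm{col}(x,z)+\mathrm{col}(y,z)=0$ (which is the paper's relation up to the overall factor $(-1)^x$) already holds among the columns of the full matrix $A$ and therefore persists after deleting any block of rows, including block $1$. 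This is a genuine shortcut: it makes the $\det(A_1)=\det(A_t)$ lemma unnecessary for the theorem, though that lemma remains of independent interest since it shows the construction does not depend on which equation is discarded. For nontriviality, the paper argues by induction on $d$, showing that any solution must have $\lambda_{i,2d-1}=\lambda_{i,2d}=\lambda_{2d-1,2d}=0$ and then reducing to $\mathcal{S}(E_{d-1})$; you give a direct, non-inductive argument from the observation that each $\Gamma_t(E_d)$ is a double star centered at $2t-1$ and $2t$, so every vertex outside the center pair meets exactly one color-$t$ edge and the corresponding $e_t$-component of its equation is a single signed monomial. I checked the double-star description against \eqref{defEd} and it is accurate, and the cleanup of the diagonal variable $\lambda_{2t-1,2t}$ via $\mathcal{E}_{2t-1}$ works once the leaf variables vanish. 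Your reduction from $\det(A_1(E_d))\neq 0$ to the triviality of $\ker A(E_d)$ correctly uses that $M_1$ is a signed sum of the remaining $M_k$, so $\ker A_1=\ker A$. The remaining "delicate points" you flag (sign bookkeeping, parity case analysis) are routine and do not hide any obstruction.
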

\begin{proof}

First noticed that for all $1\leq t\leq 2d$ we have 
$$det(A_1((v_{i,j})_{1\leq i< j\leq 2d}))=det(A_t((v_{i,j})_{1\leq i< j\leq 2d})).$$
Indeed, from Equation \ref{eq-dep2} we know that 
$$(-1)^{t}M_t((v_{i,j})_{1\leq i< j\leq 2d})=\sum_{k=1,\; k\neq t}^{2d}(-1)^{k-1}M_k((v_{i,j})_{1\leq i< j\leq 2d}).$$
In the matrix 
$$A_t((v_{i,j})_{1\leq i< j\leq 2d})
=\begin{pmatrix}
M_1((v_{i,j})_{1\leq i< j\leq 2d}) \\
M_2((v_{i,j})_{1\leq i< j\leq 2d})\\
\vdots\\
M_{t-1}((v_{i,j})_{1\leq i< j\leq 2d})\\
M_{t+1}((v_{i,j})_{1\leq i< j\leq 2d})\\
\vdots \\
M_{2d-1}((v_{i,j})_{1\leq i< j\leq 2d})\\
M_{2d}((v_{i,j})_{1\leq i< j\leq 2d})\\
\end{pmatrix},$$
we add $\sum_{k=2,\; k\neq t}^{2d}(-1)^{k-1}M_k$ to $M_1$ and to get 
$$B_t((v_{i,j})_{1\leq i< j\leq 2d})=\begin{pmatrix}
(-1)^{t}M_t((v_{i,j})_{1\leq i< j\leq 2d}) \\
M_2((v_{i,j})_{1\leq i< j\leq 2d})\\
\vdots\\
M_{t-1}((v_{i,j})_{1\leq i< j\leq 2d})\\
M_{t+1}((v_{i,j})_{1\leq i< j\leq 2d})\\
\vdots \\
M_{2d-1}((v_{i,j})_{1\leq i< j\leq 2d})\\
M_{2d}((v_{i,j})_{1\leq i< j\leq 2d}) \\
\end{pmatrix}.$$
Using properties of determinants and elementary transformations, we get 
\begin{eqnarray*}
det(A_t)((v_{i,j})_{1\leq i< j\leq 2d})&=&det(B_t)((v_{i,j})_{1\leq i< j\leq 2d})\\
&=&(-1)^{td}det(\begin{pmatrix}
M_t((v_{i,j})_{1\leq i< j\leq 2d})\\
M_2((v_{i,j})_{1\leq i< j\leq 2d})\\
\vdots\\
M_{t-1}((v_{i,j})_{1\leq i< j\leq 2d})\\
M_{t+1}((v_{i,j})_{1\leq i< j\leq 2d})\\
\vdots \\
M_{2d-1}((v_{i,j})_{1\leq i< j\leq 2d})\\
M_{2d}((v_{i,j})_{1\leq i< j\leq 2d}) \\
\end{pmatrix})\\
&=&(-1)^{td}(-1)^{d^2(t-2)}det(\begin{pmatrix}
M_2((v_{i,j})_{1\leq i< j\leq 2d})\\
\vdots\\
M_{t-1}((v_{i,j})_{1\leq i< j\leq 2d})\\
M_t ((v_{i,j})_{1\leq i< j\leq 2d})\\
M_{t+1}((v_{i,j})_{1\leq i< j\leq 2d})\\
\vdots\\
M_{2d-1}((v_{i,j})_{1\leq i< j\leq 2d})\\
M_{2d}((v_{i,j})_{1\leq i< j\leq 2d}) \\
\end{pmatrix})\\
&=&(-1)^{t(d^2+d)-2d^2}det(A_1((v_{i,j})_{1\leq i< j\leq 2d}))
\\
&=&det(A_1((v_{i,j})_{1\leq i< j\leq 2d})).
\end{eqnarray*}

Next we will show if  there exist $1\leq x<y<z\leq 2d$ such that $v_{x,y}=v_{x,z}=v_{y,z}=w$ then $det(A_1((v_{i,j})_{1\leq i< j\leq 2d}))=0$. Chose $1\leq t\leq 2d$ such that $x\neq t\neq y$, $t\neq z$. We know from the above remarks that $det(A_1((v_{i,j})_{1\leq i< j\leq 2d}))=det(A_t((v_{i,j})_{1\leq i< j\leq 2d})).$
Notice that in the matrix $A_t((v_{i,j})_{1\leq i< j\leq 2d})$ the columns $(x,y)$, $(x,z)$ and $(y,z)$ are dependent .
Indeed, since $t$ is distinct from $x$, $y$ and $z$, when removing equation $\mathcal{E}_t$ we do not affect matrices $M_x$, $M_y$ and $M_z$. Which gives us the following matrix
\begin{eqnarray}
    A_t((v_{i,j})_{1\leq i< j\leq 2d})&=&\begin{pmatrix}
\dots & 0 & \dots&0 & \dots &0&\dots\\
\dots & (-1)^{y}v_{x,y} & \dots&(-1)^{z}v_{x,z}& \dots&0&\dots\\
\dots & 0 & \dots&0& \dots&0&\dots\\
\dots & (-1)^{x-1}v_{x,y} & \dots&0& \dots&(-1)^{z}v_{y,z}&\dots\\
\dots & 0 & \dots&0& \dots&0&\dots\\
\dots & 0 & \dots&(-1)^{x-1}v_{x,z}& \dots&(-1)^{y-1}v_{y,z}&\dots\\
\dots & 0 & \dots&0& \dots&0&\dots\\
\end{pmatrix}\\
&=&\begin{pmatrix}
\dots & 0 & \dots&0 & \dots &0&\dots\\
\dots & (-1)^{y}w & \dots&(-1)^{z}w& \dots&0&\dots\\
\dots & 0 & \dots&0& \dots&0&\dots\\
\dots & (-1)^{x-1}w & \dots&0& \dots&(-1)^{z}w&\dots\\
\dots & 0 & \dots&0& \dots&0&\dots\\
\dots & 0 & \dots&(-1)^{x-1}w& \dots&(-1)^{y-1}w&\dots\\
\dots & 0 & \dots&0& \dots&0&\dots\\
\end{pmatrix}
\end{eqnarray}
If we denote by $c_{x,y}$ the column corresponding to the pair $(x,y)$ then  
$$(-1)^{z}c_{x,y}-(-1)^yc_{x,z}+(-1)^xc_{y,z}=0,$$
 which proves that if $v_{x,y}=v_{x,z}=v_{y,z}$  for some $1\leq x<y<z\leq 2d$ then $det(A_t((v_{i,j})_{1\leq i< j\leq 2d}))=0$. 

Finally, we want to show that $det(A_1(E_d))\neq 0$ where $E_d=(e_{i,j})_{1\leq i<j\leq 2d}$ is defined by Equation \ref{defEd}. It is enough to show that the system $\mathcal{S}(E_d)$ has only the trivial solution. We will prove this by induction. When $d=2$ this was checked in Remark \ref{remark1}. 

Notice that if $(\lambda_{i,j})_{1\leq i<j\leq 2d}$ is a solution for $\mathcal{S}(E_d)$ then $\lambda_{i,2d-1}=0=\lambda_{i,2d}$ for all $1\leq i\leq 2d-2$, and $\lambda_{2d-1,2d}=0$. 

Indeed, if $i$ is odd and $i\leq 2d-2$ then $e_{i,2d-1}=e_d$ and so equation $\mathcal{E}_i(E_d)$ becomes

\begin{eqnarray}
\sum_{s=1}^{i-1}(-1)^{s-1}\lambda_{s,i}e_{s,i}+ \sum_{t=i+1,\; t\neq 2d-1}^{2d}(-1)^{t}\lambda_{i,t}e_{i,t}+(-1)^{2d-1}\lambda_{i,2d-1}e_d=0.
\end{eqnarray}
Using the definition of $E_d$, notice that the vectors $e_{s,i}$  and $e_{i,t}\in\{e_1,e_2,\dots,e_{d-1}\}$,  for all $1\leq s\leq i-1$, and all  $i+1\leq t\leq 2d$, $t\neq 2d-1$. Since $\{e_1,\dots, e_d\}$ is a basis, we get $\lambda_{i,2d-1}=0$ if $i$ is odd. 

Similarly, if $i$ is even and $i\leq 2d-2$ we have $e_{i,2d}=e_d$ and so equation $\mathcal{E}_i(E_d)$ becomes
\begin{eqnarray}
\sum_{s=1}^{i-1}(-1)^{s-1}\lambda_{s,i}e_{s,i}+ \sum_{t=i+1,\; t\neq 2d}^{2d}(-1)^{t}\lambda_{i,t}e_{i,t}+(-1)^{2d}\lambda_{i,2d}e_d=0.
\end{eqnarray}
Notice that the vectors $e_{s,i}$ and $e_{i,t}\in\{e_1,e_2,\dots,e_{d-1}\}$ for all $1\leq s\leq i-1$, and all  $i+1\leq t\leq 2d-1$. Since $\{e_1,\dots, e_d\}$ is a basis, we get $\lambda_{i,2d}=0$ if $i$ is even. 

Next, we use equation $\mathcal{E}_{2d-1}(E_d)$  to get
\begin{eqnarray}
\sum_{s=1}^{2d-2}(-1)^{s-1}\lambda_{s,2d-1}e_{s,2d-1}+(-1)^{2d}\lambda_{2d-1,2d}e_{2d-1,2d}=0.
\end{eqnarray}
Since $\lambda_{i,2d-1}=0$ if $i$ is odd, $e_{2j,2d-1}=e_j$ for all $1\leq j\leq d-1$, and $e_{2d-1,2d}=e_d$ we get
\begin{eqnarray}
\sum_{s=1}^{d-1}(-1)^{2s-2}\lambda_{2s,2d-1}e_s+(-1)^{2d}\lambda_{2d-1,2d}e_d=0,
\end{eqnarray}
which obviously means that $\lambda_{2j,2d-1}=0$ for all $1\leq j\leq d-1$, and $\lambda_{2d-1,2d}=0$. 

Similarly, using equation $\mathcal{E}_{2d}(E_d)$, and the fact that $\lambda_{2j-1,2d}=0$ for all $1\leq j\leq d$ one can show that $\lambda_{2j,2d}=0$ for all $1\leq j\leq d-1$. 

To summarize, if $(\lambda_{i,j})_{1\leq i<j\leq 2d}$ is a solution for $\mathcal{S}(E_d)$ then $\lambda_{i,2d-1}=0=\lambda_{i,2d}$ for all $1\leq i\leq 2d-2$ and $\lambda_{2d-1,2d}=0$. This means that  $(\lambda_{i,j})_{1\leq i<j\leq 2(d-1)}$ is a solution for $\mathcal{S}(E_{d-1})$, and so by induction we get $\lambda_{i,j}=0$ for all $1\leq i<j\leq 2d$. 
\end{proof}

\begin{corollary}
Let $d\geq 2$ and  $V_d$ be a $d$-dimensional vector space. Then there exists a nontrivial linear map $det^{S^2}:V_d^{\otimes d(2d-1)}\to k$ with the property that $det^{S^2}(\otimes_{1\leq i< j\leq 2d} (v_{i,j}))=0$ if there exist $1\leq x<y<z\leq 2d$ such that $v_{x,y}=v_{x,z}=v_{y,z}$. 
\end{corollary}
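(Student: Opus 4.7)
The plan is to deduce the corollary directly from the preceding theorem using the universal property of the tensor product. The theorem establishes, for arbitrary $d \geq 2$, a map $det^{S^2} : V_d^{d(2d-1)} \to k$ defined by $(v_{i,j})_{1\leq i < j \leq 2d} \mapsto det(A_1((v_{i,j})_{1\leq i<j\leq 2d}))$, and shows three things about it: it is multilinear (in each slot $v_{i,j}$ separately, since determinants are multilinear in the rows/columns and $A_1$ depends linearly on each $v_{i,j}$ via the entries of the associated $M_k$ blocks); it is nontrivial (since $det(A_1(E_d)) \neq 0$ is verified by an inductive reduction showing the system $\mathcal{S}(E_d)$ admits only the trivial solution); and it vanishes whenever the repetition condition $v_{x,y} = v_{x,z} = v_{y,z}$ holds for some triple $1 \leq x < y < z \leq 2d$.

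First I would invoke the universal property of the tensor product: any multilinear map $f : V_d^{d(2d-1)} \to k$ factors uniquely through a linear map $\tilde f : V_d^{\otimes d(2d-1)} \to k$ satisfying $\tilde f(\otimes_{1 \leq i < j \leq 2d}(v_{i,j})) = f((v_{i,j})_{1 \leq i < j \leq 2d})$. Applying this to the multilinear map produced by the theorem yields the desired linear map on the tensor power, which I will also denote $det^{S^2}$ by abuse of notation.

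Next I would transfer the properties across this factorization. Nontriviality is immediate: $det^{S^2}(E_d) = det(A_1(E_d)) \neq 0$, so $\tilde{det}^{S^2}$ is not the zero functional. The vanishing property is equally immediate: if $(v_{i,j})_{1 \leq i < j \leq 2d}$ satisfies $v_{x,y} = v_{x,z} = v_{y,z}$ for some $1 \leq x < y < z \leq 2d$, then the theorem gives $det^{S^2}((v_{i,j})_{1 \leq i < j \leq 2d}) = 0$, and this equals $\tilde{det}^{S^2}(\otimes_{1 \leq i < j \leq 2d}(v_{i,j}))$ by construction.

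There is no real obstacle here; the corollary is essentially a reformulation of the theorem via the universal property, translating between the language of multilinear maps on a Cartesian power and linear maps on the tensor power. The substantive content, namely that the candidate $det(A_1(\,\cdot\,))$ really has the required vanishing property and is not identically zero, has already been done inside the proof of the theorem.
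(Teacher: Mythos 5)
Your proposal is correct and matches the paper's (implicit) reasoning exactly: the paper states this corollary without proof precisely because it is the immediate translation, via the universal property of the tensor product, of the multilinear map constructed in the preceding theorem into a linear map on $V_d^{\otimes d(2d-1)}$, with nontriviality and the vanishing property carrying over verbatim. The paper's subsequent remark about denoting both maps by $det^{S^2}$ confirms this is the intended argument.
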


\begin{corollary}
Let $d\geq 2$ and  $V_d$ be a $d$-dimensional vector space. Then $dim_k(\Lambda_{V_d}^{S^2}[2d])\geq 1$. 
\end{corollary}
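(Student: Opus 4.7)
The plan is entirely formal given the preceding Corollary. That Corollary supplies a nontrivial linear map $det^{S^2}: V_d^{\otimes d(2d-1)} \to k$ which, by construction, vanishes on every elementary tensor $\otimes_{1\leq i<j\leq 2d}(v_{i,j})$ such that $v_{x,y}=v_{x,z}=v_{y,z}$ for some $1\leq x<y<z\leq 2d$. Since these elementary tensors are, by definition, the generating set of the subspace $\mathcal{E}^{S^2}_{V_d}[2d] \subset \mathcal{T}^{S^2}_{V_d}[2d] = V_d^{\otimes d(2d-1)}$, linearity of $det^{S^2}$ forces it to annihilate the entire subspace $\mathcal{E}^{S^2}_{V_d}[2d]$.

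By the universal property of the quotient, $det^{S^2}$ then descends to a well-defined linear map $\overline{det^{S^2}}: \Lambda_{V_d}^{S^2}[2d] = \mathcal{T}^{S^2}_{V_d}[2d]/\mathcal{E}^{S^2}_{V_d}[2d] \to k$. Nontriviality is inherited from the Theorem, which exhibits $det^{S^2}(E_d) = \det(A_1(E_d)) \neq 0$ via the inductive analysis of the system $\mathcal{S}(E_d)$; consequently $\overline{det^{S^2}}$ is not the zero functional on $\Lambda_{V_d}^{S^2}[2d]$. The existence of a nonzero $k$-linear functional on a vector space forces that space to have dimension at least one, which gives $\dim_k(\Lambda_{V_d}^{S^2}[2d]) \geq 1$, as desired.

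There is really no obstacle to overcome here: once the Theorem and its Corollary are in hand, the statement is a one-line consequence of the universal property of the quotient. All of the substantive work — choosing the right $2d$-equation system, recognizing that eliminating any single equation yields a well-defined square matrix whose determinant is independent of the choice of $k$, verifying the vanishing condition via column dependence, and computing $\det(A_1(E_d))$ inductively — has already been carried out in the proof of the Theorem.
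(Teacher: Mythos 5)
Your argument is correct and is precisely the (unstated) reasoning the paper intends: the nontrivial functional $det^{S^2}$ annihilates the generators of $\mathcal{E}^{S^2}_{V_d}[2d]$, hence descends to a nonzero functional on the quotient $\Lambda_{V_d}^{S^2}[2d]$, forcing $\dim_k(\Lambda_{V_d}^{S^2}[2d])\geq 1$. The paper offers no separate proof because it treats this as an immediate consequence of the preceding corollary, exactly as you do.
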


\begin{remark} By abuse of notation we will denote $det^{S^2}$ both, the multilinear map on $V_d^{d(2d-1)}$, and the linear map on $V_d^{\otimes d(2d-1)}$.  This should not create any confusion. 
\end{remark}

\section{Applications}
\subsection{Geometrical Application}
We have the following geometrical interpretation for the condition $det^{S^2}=0$ that generalizes the results from \cite{sv}. 
\begin{proposition} Let $d\geq 2$, $V_d$ be a $d$-dimensional vector space, and take $(v_{i,j})_{1\leq i<j\leq 2d}\in V_d^{d(2d-1)}$. The following are equivalent:\\
(1) $det^{S^2}((v_{i,j})_{1\leq i<j\leq 2d})=0$. \\
(2) There exists $\lambda_{i,j}\in k$ for all $1\leq i<j\leq 2d$ not all zero such that 
\begin{eqnarray}
\sum_{s=1}^{k-1}(-1)^{s-1}\lambda_{s,k}v_{s,k}+ \sum_{t=k+1}^{2d}(-1)^{t}\lambda_{k,t}v_{k,t}=0.
\end{eqnarray}
for all $1\leq k\leq 2d$. 
\label{cor1}
\end{proposition}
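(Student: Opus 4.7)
The plan is to reduce the equivalence entirely to a reformulation of the identities already established in Section 3: that $det^{S^2} = det(A_1)$ by the definition from the theorem, and that the $2d$ equations $\mathcal{E}_k$ are related by the single linear dependence \eqref{eq-dep}, $\sum_{k=1}^{2d}(-1)^{k-1}\mathcal{E}_k = 0$.

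For the implication (2) $\Rightarrow$ (1), I would observe that condition (2) is exactly the statement that there exists a nontrivial $(\lambda_{i,j})$ with $A\,\lambda = 0$, where $A = A((v_{i,j}))$ is the full $2d^2 \times d(2d-1)$ matrix of the system $\mathcal{S}$. Since $A_1$ is obtained from $A$ by deleting rows, any nontrivial solution of $A\lambda=0$ is also a nontrivial solution of $A_1\lambda = 0$; hence $A_1$ has dependent columns and $det^{S^2}((v_{i,j})) = det(A_1) = 0$.

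For the converse (1) $\Rightarrow$ (2), assume $det(A_1) = 0$. Then $A_1$ is singular, so there exists a nontrivial tuple $(\lambda_{i,j})_{1\le i<j\le 2d}$ with $A_1\lambda = 0$; that is, equations $\mathcal{E}_2,\dots,\mathcal{E}_{2d}$ are satisfied. It remains to verify that $\mathcal{E}_1$ is automatically satisfied as well. This is exactly where the dependence relation \eqref{eq-dep} enters: rearranging gives
\begin{equation*}
\mathcal{E}_1((v_{i,j})) = -\sum_{k=2}^{2d}(-1)^{k-1}\mathcal{E}_k((v_{i,j})),
\end{equation*}
so if every term on the right vanishes on $\lambda$, then $\mathcal{E}_1$ vanishes on $\lambda$ too. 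Consequently the same nontrivial $(\lambda_{i,j})$ satisfies the entire system $\mathcal{S}$, giving (2).

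There is no real obstacle: the proposition is essentially a repackaging of the construction of $det^{S^2}$ together with the row-dependence \eqref{eq-dep2}. The only mild subtlety worth noting in the write-up is that the equivalence of ``$A_1$ singular'' with ``the full $A$ has a nontrivial null vector'' is not automatic for an arbitrary submatrix obtained by deleting a block of rows; it relies crucially on the fact that the removed block $M_1$ lies in the span of the remaining blocks $M_2,\dots,M_{2d}$, which is precisely the content of \eqref{eq-dep2}.
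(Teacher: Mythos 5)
Your proof is correct and follows essentially the same route as the paper, which dismisses the proposition in one line as following from the definition of $det^{S^2}$; you simply spell out the details that the paper leaves implicit. In particular, you correctly identify the one point that actually needs the dependence relation \eqref{eq-dep2} --- namely that a null vector of $A_1$ automatically annihilates the deleted block $M_1$, so "(1) $\Rightarrow$ (2)" is not vacuous.
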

\begin{proof} It follows from the definition of map $det^{S^2}$. 
\end{proof}

In \cite{sv} it was proved that if $p_i\in V_3$ for all $1\leq i\leq 6$, and we define $v_{i,j}=p_j-p_i$ then $det^{S^2}((v_{i,j})_{1\leq i<j\leq 6})=0$. It was conjectured that a similar result is true in general. Indeed  we have the following. 
\begin{corollary} Let $d\geq 2$ and  $V_d$ be a $d$-dimensional vector space. Take $p_i\in V_d$ for $1\leq i\leq 2d$ and define $v_{i,j}=p_j-p_i$ for all $1\leq i<j\leq 2d$. Then $det^{S^2}((v_{i,j})_{1\leq i<j\leq 2d})=0$. 
\end{corollary}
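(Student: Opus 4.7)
The plan is to invoke Proposition \ref{cor1}: it suffices to exhibit scalars $\lambda_{i,j} \in k$, not all zero, which satisfy every equation of the system $\mathcal{S}((v_{i,j})_{1\leq i<j\leq 2d})$ when $v_{i,j} = p_j - p_i$.

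First I would produce a nontrivial affine relation among the $p_i$'s. The linear map $k^{2d} \to k \oplus V_d$ sending $(\alpha_1,\dots,\alpha_{2d}) \mapsto \bigl(\sum_i \alpha_i,\; \sum_i \alpha_i p_i\bigr)$ has image of dimension at most $d+1$, and since $2d \geq d+2$ for $d \geq 2$ its kernel has dimension at least $d-1 \geq 1$. Hence there exist scalars $\alpha_1, \dots, \alpha_{2d} \in k$, not all zero, satisfying
\begin{eqnarray*}
\sum_{i=1}^{2d} \alpha_i = 0 \qquad\text{and}\qquad \sum_{i=1}^{2d}\alpha_i p_i = 0.
\end{eqnarray*}

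Next I would define $\lambda_{i,j} := (-1)^{i+j}\alpha_i \alpha_j$ for $1 \leq i < j \leq 2d$ and verify both nontriviality and the system. Nontriviality is immediate: if all $\lambda_{i,j}$ vanished then $\alpha_i \alpha_j = 0$ whenever $i \neq j$, so at most one $\alpha_i$ is nonzero; combined with $\sum_i \alpha_i = 0$ this would force all $\alpha_i = 0$, a contradiction. For equation $\mathcal{E}_k$, I would use the sign identities $(-1)^{s-1}\lambda_{s,k} = (-1)^{k-1}\alpha_s\alpha_k$ for $s<k$ and $(-1)^t\lambda_{k,t} = (-1)^k \alpha_k\alpha_t$ for $t>k$, which let us factor the left side as
\begin{eqnarray*}
(-1)^k\alpha_k\Bigl[-\sum_{s<k}\alpha_s v_{s,k} + \sum_{t>k}\alpha_t v_{k,t}\Bigr].
\end{eqnarray*}
Substituting $v_{s,k} = p_k-p_s$ and $v_{k,t} = p_t-p_k$ and regrouping, the bracket equals $-p_k\sum_{i\neq k}\alpha_i + \sum_{i\neq k}\alpha_i p_i = -p_k(-\alpha_k) + (-\alpha_k p_k) = 0$, using the two relations on $(\alpha_i)$.

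The only non-routine step is postulating the correct ansatz $\lambda_{i,j} = (-1)^{i+j}\alpha_i\alpha_j$. I would arrive at it by noticing that these signs are exactly what is needed to cancel the signs $(-1)^{s-1}$ and $(-1)^t$ built into the equations $\mathcal{E}_k$, reducing the solvability of the system to the existence of a nontrivial affine relation among the $p_i$'s, which is guaranteed by the dimension count since there are more points than the affine dimension of $V_d$ can support.
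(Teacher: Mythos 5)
Your proof is correct, and it follows the same overall strategy as the paper: exhibit a nontrivial solution of the system $\mathcal{S}((v_{i,j})_{1\leq i<j\leq 2d})$ built as a product of coefficients coming from a dependence relation, then invoke Proposition \ref{cor1}. The difference is in the execution, and yours is cleaner. The paper starts from a linear dependence $\sum_{t=2}^{2d}(-1)^t\lambda_t v_{1,t}=0$ among the difference vectors $v_{1,t}=p_t-p_1$ and must then split into two cases according to whether $\Lambda=\sum_{t=2}^{2d}(-1)^t\lambda_t$ vanishes, with a different formula for $\lambda_{i,j}$ in each case (dividing by $\Lambda$ in one, by the first nonzero $\lambda_a$ in the other) and two separate verifications. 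By working instead with an affine relation $\sum_i\alpha_i=0$, $\sum_i\alpha_i p_i=0$ among the points themselves --- which is just the homogenized form of the paper's relation, with $\alpha_1$ playing the role of $-\Lambda$ --- you get the uniform, division-free ansatz $\lambda_{i,j}=(-1)^{i+j}\alpha_i\alpha_j$ and a single two-line verification of every equation $\mathcal{E}_k$. Your sign computations $(-1)^{s-1}\lambda_{s,k}=(-1)^{k-1}\alpha_s\alpha_k$ and $(-1)^t\lambda_{k,t}=(-1)^k\alpha_k\alpha_t$ check out, the bracket does collapse to $\alpha_k p_k-\alpha_k p_k=0$ using the two relations, and your nontriviality argument (at most one $\alpha_i$ nonzero forces all zero via $\sum_i\alpha_i=0$) is sound. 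What your version buys is the elimination of the case analysis and of all denominators; what the paper's version makes slightly more visible is the direct role of the bound $\dim V_d=d<2d-1$ on the difference vectors, though your count $2d>d+1$ is of course equivalent.
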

\begin{proof} 
We will show that  the system $\mathcal{S}((v_{i,j})_{1\leq i<j\leq 2d})$ has a nontrivial solution and so by Proposition  \ref{cor1}, we get that  $det^{S^2}((v_{i,j})_{1\leq i<j\leq 2d})=0$.

Consider the vectors $v_{1,j}=p_j-p_1$ for $2\leq j\leq 2d$. Since $dim_k(V_d)=d<2d-1$ then there exist $\lambda_{j}$ for $2\leq j\leq 2d$ not all zero such that 
\begin{eqnarray}
\sum_{t=2}^{2d}(-1)^{t}\lambda_{t}v_{1,t}=0.\label{defL1}
\end{eqnarray}
Case I. Assume that $\displaystyle{\Lambda=\sum_{t=2}^{2d}(-1)^{t}\lambda_{t}\neq 0}$. For $1\leq i< j\leq 2d$ take
\begin{equation}
\lambda_{i,j}=\begin{cases}
		\lambda_j ~~{\rm if}~~i=1,\\
\\
		\frac{\lambda_{i}\lambda_{j}}{\Lambda} ~~{\rm if}~~i> 1.\\
\end{cases}
\label{lij1}
\end{equation}
We will check that  $\{\lambda_{i,j}\}_{1\leq i<j\leq 2d}$ is a nontrivial solution for the system $\mathcal{S}((v_{i,j})_{1\leq i<j\leq 2d})$.

Equation $\mathcal{E}_1$ is satisfied because of the definition of $\lambda_{1,j}$ (see Equation \ref{defL1}). 
Take $2\leq k\leq 2d$, notice that $v_{s,k}=p_k-p_s=v_{1,k}-v_{1,s}$ if $1< s\leq k-1$, and  $v_{k,t}=p_t-p_k=v_{1,t}-v_{1,k}$ if $k+1\leq t\leq 2d$. So we have:
\begin{allowdisplaybreaks}
\begin{eqnarray*}
&&\sum_{s=1}^{k-1}(-1)^{s-1}\lambda_{s,k}v_{s,k}+\sum_{t=k+1}^{2d}(-1)^{t}\lambda_{k,t}v_{k,t}\\
&=&\lambda_{k}v_{1,k}+\sum_{s=2}^{k-1}(-1)^{s-1}\frac{\lambda_{s}\lambda_{k}}{\Lambda}(v_{1,k}-v_{1,s})+\sum_{t=k+1}^{2d}(-1)^{t}\frac{\lambda_{k}\lambda_{t}}{\Lambda}(v_{1,t}-v_{1,k})\\
&=&(\lambda_{k}+\sum_{s=2}^{k-1}(-1)^{s-1}\frac{\lambda_{k}\lambda_{s}}{\Lambda}+\sum_{t=k+1}^{2d}(-1)^{t-1}\frac{\lambda_{t}\lambda_{k}}{\Lambda})v_{1,k}+
\sum_{s=2}^{k-1}(-1)^{s}\frac{\lambda_{s}\lambda_{k}}{\Lambda}v_{1,s}+\sum_{t=k+1}^{2d}(-1)^{t}\frac{\lambda_{k}\lambda_{t}}{\Lambda}v_{1,t}\\
&=&(\lambda_{k}+\sum_{s=2}^{k-1}(-1)^{s-1}\frac{\lambda_{k}\lambda_{s}}{\Lambda}+(-1)^{k-1}\frac{\lambda_{k}^2}{\Lambda}v_{1,k}+\sum_{t=k+1}^{2d}(-1)^{t-1}\frac{\lambda_{t}\lambda_{k}}{\Lambda})v_{1,k}\\
&&+\sum_{s=2}^{k-1}(-1)^{s}\frac{\lambda_{s}\lambda_{k}}{\Lambda}v_{1,s}+(-1)^{k}\frac{\lambda_{k}^2}{\Lambda}v_{1,k}+\sum_{t=k+1}^{2d}(-1)^{t}\frac{\lambda_{k}\lambda_{t}}{\Lambda}v_{1,t}\\
&=&(\lambda_{k}-\lambda_{k}\frac{\Lambda}{\Lambda})v_{1,k}+\frac{\lambda_{k}}{\Lambda}\sum_{s=2}^{2d}(-1)^{s}\lambda_{s}v_{1,s}\\
&=&0,
\end{eqnarray*}
\end{allowdisplaybreaks}
and so equation $\mathcal{E}_k$ is satisfied.

Case II. Assume that $\displaystyle{\Lambda=\sum_{t=2}^{2d}(-1)^{t}\lambda_{t}=0}$. Take $2\leq a\leq 2d-1$  with the property that $\lambda_a\neq 0$ and $\lambda_i=0$ for all $2\leq i<a$ (such an integer exists since not all $\lambda_j$ are zero). 
For all $1\leq i<j\leq 2d$ we define 

\begin{equation}
\lambda_{i,j}=\begin{cases}
		0 ~~{\rm if}~~i<a,\\
\\
		\frac{\lambda_{i}\lambda_{j}}{\lambda_a} ~~{\rm if}~~i\geq a.\\
\end{cases}
\label{lij2}
\end{equation}

We will check that $\{\lambda_{i,j}\}_{1\leq i<j\leq 2d}$ is a solution for the system $\mathcal{S}((v_{i,j})_{1\leq i<j\leq 2d})$. 

If $k<a$ then the equation $\mathcal{E}_k$ is trivially satisfied since when $k<a$ we have  $\lambda_{i,k}=0$ for all $1 \leq i<k$, and $\lambda_{k,j}=0$  for all $k<j\leq 2d$.  

Next assume that $k=a$. We use the fact that $v_{i,j}=p_j-p_i=v_{1,j}-v_{1,i}$ for $1\leq i<j\leq 2d$, $\lambda_{s,a}=0$ for all $1\leq s<a-1$, and $\lambda_{a,t}=\frac{\lambda_t\lambda_a}{\lambda_a}$ for all $a<t\leq 2d$ to get

\begin{allowdisplaybreaks}
\begin{eqnarray*}
&&\sum_{s=1}^{a-1}(-1)^{s-1}\lambda_{s,a}v_{s,a}+\sum_{t=a+1}^{2d}(-1)^{t}\lambda_{a,t}v_{a,t}\\
&=&\sum_{t=a+1}^{2d}(-1)^{t}\frac{\lambda_{t}\lambda_{a}}{\lambda_{a}}(v_{1,t}-v_{1,a})\\
&=&\sum_{t=a+1}^{2d}(-1)^{t}\lambda_{t}v_{1,t} -(\sum_{t=a+1}^{2d}(-1)^{t}\lambda_{t})v_{1,a}\\
&=&(-1)^{a-1}\lambda_av_{1,a}-(-1)^{a-1}\lambda_av_{1,a}\\
&=&0.
\end{eqnarray*}
\end{allowdisplaybreaks}
Where the next to last equality follows because $\displaystyle{\sum_{t=a}^{2d}(-1)^t\lambda_tv_{1,t}=0}$, and $\displaystyle{\sum_{t=a}^{2d}(-1)^t\lambda_t=0}$.

Finally, assume that $k>a$. We use the fact that $v_{i,j}=p_j-p_i=v_{1,j}-v_{1,i}$ for $2\leq i<j\leq 2d$,  that $\lambda_{i,k}=0$ for all $1\leq i<a-1$, $\lambda_{s,k}=\frac{\lambda_k\lambda_s}{\lambda_a}$ for all $a\leq s<k-1$, and $\lambda_{k,t}=\frac{\lambda_t\lambda_k}{\lambda_a}$ for all $k<t\leq 2d$ to get

\begin{allowdisplaybreaks}
\begin{eqnarray*}
&&\sum_{s=1}^{k-1}(-1)^{s-1}\lambda_{s,k}v_{s,k}+\sum_{t=k+1}^{2d}(-1)^{t}\lambda_{k,t}v_{k,t}\\
&=&\sum_{s=a}^{k-1}(-1)^{s-1}\frac{\lambda_{s}\lambda_{k}}{\lambda_a}(v_{1,k}-v_{1,s})+\sum_{t=k+1}^{2d}(-1)^{t}\frac{\lambda_{k}\lambda_{t}}{\lambda_a}(v_{1,t}-v_{1,k})\\
&=&\sum_{s=a}^{k-1}(-1)^{s-1}\frac{\lambda_{s}\lambda_{k}}{\lambda_a}v_{1,k}+(-1)^{k-1}\frac{\lambda_{k}\lambda_{k}}{\lambda_a}v_{1,k}+\sum_{t=k+1}^{2d}(-1)^{t-1}\frac{\lambda_{k}\lambda_{t}}{\lambda_a}v_{1,k}+\\
&&\sum_{s=a}^{k-1}(-1)^{s}\frac{\lambda_{s}\lambda_{k}}{\lambda_a}v_{1,s}+(-1)^{k}\frac{\lambda_{k}\lambda_{k}}{\lambda_a}v_{1,k}+\sum_{t=k+1}^{2d}(-1)^{t}\frac{\lambda_{k}\lambda_{t}}{\lambda_a}v_{1,t}\\
&=&\frac{\lambda_{k}}{\lambda_a}(\sum_{s=a}^{2d}(-1)^{s-1}\lambda_{s})v_{1,k}+\frac{\lambda_{k}}{\lambda_a}\sum_{t=a}^{2d}(-1)^{t}\lambda_tv_{1,t}\\
&=&0,
\end{eqnarray*}
\end{allowdisplaybreaks}
and so equation $\mathcal{E}_k$ is satisfied. 

This means that the  system $\mathcal{S}((v_{i,j})_{1\leq i<j\leq 2d})$ has a nontrivial solution, and so by Proposition \ref{cor1} we get that  $det^{S^2}((v_{i,j})_{1\leq i<j\leq 2d})=0$.

\end{proof}

\subsection{Cycle-free $d$-partitions of the complete graph $K_{2d}$}

Next we give an application of the map $det^{S^2}$ to combinatorics. More precisely we show that the $det^{S^2}$ map can detect if a $d$-partition of the complete graph $K_{2d}$ is cycle-free or not. 
 
\begin{theorem} Let $(\Gamma_1,\dots,\Gamma_d)\in \mathcal{P}_d(K_{2d})$ a $d$-partition of the complete graph $K_{2d}$. Then $(\Gamma_1,\dots,\Gamma_d)$ is cycle-free if and only if $det^{S^2}(f_{(\Gamma_1,\dots,\Gamma_d)})\neq 0$. 
\end{theorem}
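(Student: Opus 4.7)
The plan is to analyze the matrix $A_1(f_{(\Gamma_1,\dots,\Gamma_d)})$ directly and expose a block-diagonal structure that it inherits from the partition $(\Gamma_1,\dots,\Gamma_d)$. Write $f_{(\Gamma_1,\dots,\Gamma_d)} = \otimes_{1\leq i<j\leq 2d}(e_{c(i,j)})$, where $c(i,j) \in \{1,\dots,d\}$ denotes the color of the edge $(i,j)$. Since each $v_{u,v} = e_{c(u,v)}$ is a basis vector, the vector equation $\mathcal{E}_k$ splits along $\mathcal{B}_d$ into $d$ scalar equations, and the $i$-th of these involves only the variables $\lambda_{u,v}$ with $c(u,v) = i$. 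Reordering the rows of $A$ by the pair $(i,k)$ (color first, then vertex) and the columns by the color of the edge, the matrix $A$ becomes block-diagonal with blocks $B_1,\dots,B_d$; the block $B_i$ has size $2d \times |E(\Gamma_i)|$, and its column indexed by the edge $(u,v) \in E(\Gamma_i)$ has only two nonzero entries, namely $(-1)^v$ in row $u$ and $(-1)^{u-1}$ in row $v$. A direct check shows that rescaling row $k$ by $(-1)^k$ and column $(u,v)$ by $(-1)^{u+v}$ transforms $B_i$ into the standard oriented vertex-edge incidence matrix of $\Gamma_i$. Since removing equation $\mathcal{E}_1$ amounts to deleting the $k=1$ row from each block, $A_1$ is block-diagonal with blocks $B_i^*$ of size $(2d-1) \times |E(\Gamma_i)|$, and $\det(A_1) = \pm\prod_{i=1}^d \det(B_i^*)$ whenever the blocks are square.

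Next I would invoke the classical fact that the reduced oriented incidence matrix of a graph on $n$ vertices with $n-1$ edges is invertible if and only if the graph is a spanning tree. If $(\Gamma_1,\dots,\Gamma_d)$ is cycle-free then each $\Gamma_i$ has at most $2d-1$ edges, so the identity $\sum_i |E(\Gamma_i)| = \binom{2d}{2} = d(2d-1)$ forces $|E(\Gamma_i)| = 2d-1$ for every $i$, making each $\Gamma_i$ a spanning tree. Every $B_i^*$ is then a square invertible matrix, hence $\det(A_1) = \pm 1 \neq 0$. Conversely, if the partition is not cycle-free, then either some $|E(\Gamma_i)| \neq 2d-1$ (so some block has strictly more columns than rows, whose columns are therefore linearly dependent and force $\det(A_1) = 0$), or every block is square but some $\Gamma_{i_0}$ contains a cycle and is hence disconnected, giving its incidence matrix rank at most $2d-2$ and making $B_{i_0}^*$ singular. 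Alternatively, this "not cycle-free" direction follows immediately from Lemma \ref{generators} combined with the previous theorem: $\hat{f}_{(\Gamma_1,\dots,\Gamma_d)} = 0$ in $\Lambda_{V_d}^{S^2}[2d]$, and since $det^{S^2}$ vanishes on the generators of $\mathcal{E}_{V_d}^{S^2}[2d]$, linearity gives $det^{S^2}(f_{(\Gamma_1,\dots,\Gamma_d)}) = 0$.

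The main obstacle is the sign bookkeeping needed to identify each $B_i$ (up to $\pm 1$ row and column rescalings) with the oriented vertex-edge incidence matrix of $\Gamma_i$; this is straightforward but requires a careful check of the parities $(-1)^v$ and $(-1)^{u-1}$ coming from the definition of the $\mathcal{E}_k$. Once this structural identification is in place, both directions of the equivalence reduce to standard properties of incidence matrices together with the edge-counting argument showing that cycle-freeness forces every $\Gamma_i$ to be a spanning tree.
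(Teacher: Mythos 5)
Your proof is correct, but it takes a genuinely different route from the paper's. The paper never makes the block structure of $A_1(f_{(\Gamma_1,\dots,\Gamma_d)})$ explicit: for the forward direction it assumes a nontrivial solution $(\mu_{i,j})$ of the system $\mathcal{S}(f_{(\Gamma_1,\dots,\Gamma_d)})$, passes to the support graph $G$ of that solution, uses cycle-freeness to find a color $x$ and a vertex $a$ of degree one in the induced subgraph $\Phi_x$, and reads off from equation $\mathcal{E}_a$ a nontrivial linear dependence among $e_1,\dots,e_d$, a contradiction. For the converse it does not touch the matrix at all, but invokes Lemma \ref{generators}: a partition containing a cycle reduces, by the induction on cycle length from \cite{lss}, to the case of a $3$-cycle, where the defining vanishing property applies; this establishes the vanishing for \emph{any} linear map with that universal property, not just for $\det(A_1)$. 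Your argument instead identifies $A_1(f_{(\Gamma_1,\dots,\Gamma_d)})$, after permuting rows by color and columns by the color of the edge and rescaling by signs, with the direct sum of the reduced oriented vertex--edge incidence matrices of the $\Gamma_i$; both directions then follow from the standard facts that the reduced incidence matrix of a set of $2d-1$ edges is nonsingular exactly when those edges form a spanning tree (its determinant is $\pm 1$, so this works in any characteristic), and that a graph on $2d$ vertices with $2d-1$ edges and a cycle is disconnected, hence has incidence rank at most $2d-2$. The sign bookkeeping you flag does check out: the entries $(-1)^{v}$ in row $u$ and $(-1)^{u-1}$ in row $v$ become $+1$ and $-1$ after the rescalings you describe, and for the rank and vanishing statements the exact signs are irrelevant anyway. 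What your approach buys is a sharper conclusion ($det^{S^2}(f_{(\Gamma_1,\dots,\Gamma_d)})=\pm 1$ on cycle-free partitions) and a self-contained converse that avoids the external Lemma 3.5 of \cite{lss}; what the paper's converse buys is generality, since it applies to every map satisfying the universal property, which is what matters for the uniqueness question left open there.
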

\begin{proof}
First we show that if $(\Gamma_1,\dots,\Gamma_d)$ is a  cycle-free $d$-partition of $K_{2d}$ then $det^{S^2}(f_{(\Gamma_1,\dots,\Gamma_d)})\neq 0$. Indeed, from the definition of $det^{S^2}$, we know that if $det^{S^2}(f_{(\Gamma_1,\dots,\Gamma_d)})= 0$ then  the system $\mathcal{S}(f_{(\Gamma_1,\dots,\Gamma_d)})$ has a nontrivial solution $(\mu_{i,j})_{1\leq i<j\leq 2d}$.  Take $G$ the sub graph of $K_{2d}$ determined by  $V(G)=V(K_{2d})$, and $E(G)=\{(a,b)\in E(K_{2d})|\mu_{a,b}\neq 0\}$. Since the solution $(\mu_{i,j})_{1\leq i<j\leq 2d}$ is nontrivial then $E(G)$ is not empty. The partition $(\Gamma_1,\dots,\Gamma_d)$ is inducing a partition $(\Phi_1,\dots,\Phi_{d})$ of $G$. Take $x\in \{1,\dots,d\}$ such that $E(\Phi_x)\neq \emptyset$ (such an $x$ exists since $E(G)\neq \emptyset$).  
Because $\Gamma_x$ is cycle-free it follows that $\Phi_x$ is also cycle free, and so we can find a vertex $a\in \{1,\dots,2d\}$ such that the degree of the vertex $a$ in the graph $\Phi_x$ is one (i.e. there exists exactly one other vertex $b$ with the property that the edge $(a, b)\in E(\Phi_x)$). 

For simplicity we will assume that $a<b$ (the case $a>b$ is similar). Consider the equation $\mathcal{E}_a((w_{i,j})_{1\leq i<j\leq 2d})$ where  $(w_{i,j})_{1\leq i<j\leq 2d}=f_{(\Gamma_1,\dots,\Gamma_d)}$. We have
\begin{eqnarray}
\sum_{s=1}^{a-1}(-1)^{s-1}\mu_{s,a}w_{s,a}+ \sum_{t=a+1}^{2d}(-1)^{t}\mu_{a,t}w_{a,t}=0.
\label{eqk2}
\end{eqnarray}
Notice that    $w_{s,a}$, $w_{a,t}\in \{e_1,\dots e_d\}$. Moreover $w_{s,a}\neq e_x$ for all $1\leq s\leq a-1$, and for $a+1\leq t\leq 2d$ we have $w_{a,t}=e_x$ if and only if $t=b$. Finally, since the coefficient of $w_{a,b}=e_x$ is $\mu_{a,b}\neq 0$, we get that equation $\mathcal{E}_a$ gives a nontrivial linear dependence relation among the vectors $\{e_1,\dots,e_d\}$, which is obviously a contradiction since $\mathcal{B}_d$ is a basis.   This means that the system  $\mathcal{S}(f_{(\Gamma_1,\dots,\Gamma_d)})$ has only the trivial solution, and so $det^{S^2}(f_{(\Gamma_1,\dots,\Gamma_d)})\neq 0$. 

For the converse we prove a more general statement. Take $D:V_d^{\otimes d(2d-1)}\to k$ to be a linear map with the property that $D(\otimes_{1\leq i<j\leq 2d} (v_{i,j}))=0$ if there exist $1\leq x<y<z\leq 2d$ such $v_{x,y}=v_{x,z}=v_{y,z}$. It follows from results proved in  \cite{lss}  (see Lemma \ref{generators}), that if the $d$-partition $(\Gamma_1,\dots,\Gamma_d)$ is not cycle-free, then  that $D(f_{(\Gamma_1,\dots,\Gamma_d)})=0$. However, this exact statement was not made explicitly in \cite{lss}, so for completeness sake, we recall the main steps of that proof.  

Let $(\Gamma_1,\dots,\Gamma_d)$ be a $d$-partition of $K_{2d}$ and  $f_{(\Gamma_1,\dots,\Gamma_d)}=\otimes_{1\leq i<j\leq 2d} (f_{i,j})\in \mathcal{G}_{\mathcal{B}_d}[2d]$.
If $(\Gamma_1,\dots,\Gamma_d)$ is not cycle free then there exists $1\leq k\leq d$ such that $\Gamma_k$ has a cycle. If $\Gamma_k$ has cycle of length $3$, then there exist $1\leq x<y<z\leq 2d$ such that $(x,y)$, $(x,z)$ and $(y,z)\in E(\Gamma_k)$, and so $f_{x,y}=f_{x,z}=f_{y,z}=e_k$. By the property of the map $D$ this  means that $D(f_{(\Gamma_1,\dots,\Gamma_d)})=0$. 

If the length of the cycle in $\Gamma_k$ is  $l>4$, then it was shown in \cite{lss} (see Lemma 3.5), that there exist two $d$-partition $(\Gamma_1^{(1)},\dots,\Gamma_d^{(1)})$ and $(\Gamma_1^{(2)},\dots,\Gamma_d^{(2)})$ of $K_{2d}$ such that $\Gamma_{k}^{(1)}$ and $\Gamma_{k}^{(2)}$ have cycles of length $l-1$, and 
$$D(f_{(\Gamma_1,\dots,\Gamma_d)})+D(f_{(\Gamma_1^{(1)},\dots,\Gamma_d^{(1)})})+D(f_{(\Gamma_1^{(2)},\dots,\Gamma_d^{(2)})})=0.$$
From here, the statement follows by induction. 
\end{proof}

\begin{remark}
The result in this section is consistent with conjecture from \cite{lss} where it was proposed that the $det^{S^2}$ map can be written as a sum over  the set of homogeneous cycle free $d$-partition of the graph $K_{2d}$ 
\begin{equation}
det^{S^2}(\otimes_{1\leq i<j\leq 2d} (v_{i,j}))=\sum_{(\Gamma_1,...,\Gamma_d)\in \mathcal{P}^{h,cf}_d(K_{2d})} \varepsilon_d^{S^2}((\Gamma_1,...,\Gamma_d))M_{(\Gamma_1,...,\Gamma_d)}((v_{i,j})_{1\leq i<j\leq 2d}),
\label{detS2d}
\end{equation}
where $\varepsilon_d^{S^2}:\mathcal{P}^{h,cf}_d(K_{2d})\to \{-1,1\}$, and $M_{(\Gamma_1,...,\Gamma_d)}((v_{i,j})_{1\leq i<j\leq 2d})$ is a monomial associated to $(\Gamma_1,...,\Gamma_d)$ and to the element $\otimes_{1\leq i<j\leq 2d}(v_{i,j})\in V_d^{\otimes d(2d-1)}$ (see \cite{lss} for more details).  In order to prove this formula for every $d$, it would be enough to show that the involutions $(\Gamma_1,\dots,\Gamma_d)\to (\Gamma_1,\dots,\Gamma_d)^{(x,y,z)}$ (described in Lemma \ref{keylemma}) act transitively on $\mathcal{P}_d(K_{2d})$. This would also imply the uniqueness up to a constant of the map $det^{S^2}$. 
\end{remark}

\begin{remark} The construction of $det^{S^2}$ is somehow similar with the definition the resultant of two polynomials (see \cite{gkz}). It might be interesting to understand if there is a more general construction that cover both examples. 
\end{remark}
\begin{remark}
$\Lambda_{V_d}^{S^3}$ was introduced in \cite{ls2} as another generalization of the exterior algebra. When $d=2$, it was shown that there exists a linear map $det^{S^3}:V_2^{\otimes 20}\to k$ with the property that $det^{S^3}(\otimes_{1\leq i<j<k\leq 6}(v_{i,j,k}))=0$ if there exist $1<x<y<z<t\leq 6$ such that $v_{x,y,z}=v_{x,y,t}=v_{x,z,t}=v_{y,z,t}$. 
We expect that the construction presented in this paper can be adapted to define a map $det^{S^3}$ for any $d$.  
\end{remark}


\bibliographystyle{amsalpha}

\end{document}